\newtheorem{theorem}{Theorem}
\newtheorem{corollary}{Corollary}
\newtheorem{note}{Note}
\newtheorem{proposition}{Proposition}
\title{ Generalized Compositions and Weighted Fibonacci Numbers}
\author{{Milan Janji\'c}}%
\date{}
\begin{document}
\maketitle
\begin{center}
Department of Mathematics and Informatics\\
 University of Banja Luka\\
78000 Banja Luka, Republic of Srpska, BA\\
email: agnus@blic.net
\end{center}

\begin{abstract}
In this paper we consider particular generalized compositions of a natural number with a given number of parts. Its number is a weighted polynomial coefficient. The number of all generalized compositions of a natural number is a weighted $r$-generalized Fibonacci number. A relationship between these two numbers will be derived.

We shall thus obtain a generalization of the well-known formula connecting Fibonacci numbers with the binomial coefficients.
\end{abstract}

Keywords: Compositions, Fibonacci numbers, Polynomial coefficients.
\section{Introduction}
 Weighted $r$-generalized Fibonacci numbers and weighted polynomial coefficients are defined by M. Schork, \cite{sh}, to be a generalization of the well-known formula that expresses Fibonacci numbers in terms of the binomial coefficients. This formula
  may also be interpreted as a connection between all compositions of a natural number $n,$  of a certain kind, and the number of compositions of $n$ of the same kind with a fixed number of parts.  Namely, Fibonacci numbers count the compositions of the natural numbers in which each part is either $1$ or $2.$ On the other hand, we shall show that the binomial coefficients count the compositions of the same kind into a given number of parts.

Generalized compositions are considered by M. Janji\'c, in \cite{mil}.
They are defined as the compositions when there are different types of each natural number.
In this paper we consider the  generalized compositions with a given number of parts.
We show that the weighted $r$-generalized Fibonacci numbers count the numbers of all generalized  compositions. Also, the weighted polynomial coefficients count the number of generalized compositions of the same kind into a given number of parts.

A formula which connects the weighted $r$-generalized  Fibonacci numbers with
the weighted polynomial coefficients is derived. The formula extends the above mentioned relationship between Fibonacci numbers and the binomial coefficients.

Note that our results are self contained, depending neither on the results  in \cite{mil} nor on the results in \cite{sh}.
\section{Generalized Compositions, Weighted Fibonacci Numbers and Weighted Polynomial Coefficients}
Let $\mathbf b=(b_1,b_2,\ldots,b_r)$ be a finite sequence of nonnegative integers, and let $k,n$ be positive integers.
The compositions of  $n$ into $k$ parts, under the condition that there are $b_1$ different types of  $1$, $b_2$ different types of  $2$, and so on, will be called generalized compositions of $n$ into $k$ parts.
We let  $C^{(r)}(k,n)$ denote
its number.

In the following proposition we state some simple properties of such compositions.
\begin{proposition}\label{pp1} The following equations are true:
   \[C^{(r)}(1,i)=b_i,\;(i=1,2,\ldots,r),\;C^{(r)}(n,n)=b_1^n,\]\[C^{(r)}(k,n)=0,\;(k\cdot r<n),\;C^{(r)}(k,n)=0,\;(k>n).\]
\end{proposition}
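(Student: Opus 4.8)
The plan is to verify each of the four equations by a direct counting argument, relying only on the two structural facts built into the definition: every part of a generalized composition takes a value in $\{1,2,\ldots,r\}$ (since types are assigned only to these numbers), and a part equal to $i$ may be chosen in $b_i$ distinct types.

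First I would handle the two exact evaluations. For $C^{(r)}(1,i)$, a composition of $i$ into a single part must have that part equal to $i$; as $1\le i\le r$ this is an admissible value, and the only remaining freedom is the choice among its $b_i$ types, giving $C^{(r)}(1,i)=b_i$. For $C^{(r)}(n,n)$, a composition of $n$ into exactly $n$ parts, each at least $1$ and summing to $n$, leaves no slack, so every part must equal $1$; each of the $n$ parts then independently receives one of $b_1$ types, and multiplying over the parts yields $b_1^n$.

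Next I would dispose of the two vanishing statements, which follow from the extreme values a single part can take. Since each of the $k$ parts is at most $r$, the sum of the parts is at most $kr$; hence if $kr<n$ the target cannot be reached and $C^{(r)}(k,n)=0$. Dually, since each part is at least $1$, the sum is at least $k$, so if $k>n$ the target is overshot and again $C^{(r)}(k,n)=0$.

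I expect no serious obstacle here: the claims become immediate once the admissible range $\{1,\ldots,r\}$ of a part is made explicit. The only point meriting a word of care is that the type multiplicities enter multiplicatively and independently across the parts, which is exactly what turns the essentially unique composition underlying the $C^{(r)}(n,n)$ case into the power $b_1^n$ rather than a single solution.
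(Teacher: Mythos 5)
Your proof is correct and follows the only natural route: the paper itself merely states that ``all equations are easy to verify,'' and your direct counting argument supplies exactly the routine details being elided. Nothing is missing; the observations that each part lies in $\{1,\ldots,r\}$, contributes at least $1$ and at most $r$ to the sum, and carries an independent multiplicative choice of type are precisely what the paper takes for granted.
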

\begin{proof} All equations are easy to verify.
\end{proof}
\begin{proposition} The following recursion is true:
\begin{equation}\label{e1}C^{(r)}(k,n)=\sum_{i=1}^{\min\{r,n-k+1\}}b_{i}C^{(r)}(k-1,n-i),\;(k\leq n\leq kr).\end{equation}
\end{proposition}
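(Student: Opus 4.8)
The plan is to prove the recursion by a direct combinatorial decomposition, conditioning on the size and type of the last part of each generalized composition. First I would fix $k$ and $n$ with $k \le n \le kr$ and consider an arbitrary generalized composition of $n$ into $k$ parts, written as an ordered sum $n = x_1 + x_2 + \cdots + x_k$ together with a choice of type for each part. Because types are defined only for the integers $1, \ldots, r$, every part satisfies $1 \le x_j \le r$. I would then partition the collection of all such compositions according to the value $i = x_k$ of the final part.

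For a fixed value $i$ of the last part there are exactly $b_i$ admissible types for that part, and deleting it leaves an ordered sum $x_1 + \cdots + x_{k-1} = n - i$ with a type attached to each of its $k-1$ parts, that is, a generalized composition of $n-i$ into $k-1$ parts. Conversely, any generalized composition of $n-i$ into $k-1$ parts, extended by a part equal to $i$ in one of its $b_i$ types, yields a generalized composition of $n$ into $k$ parts whose last part is $i$. This establishes a bijection between the two families, so the block of compositions with last part $i$ has cardinality $b_i\, C^{(r)}(k-1, n-i)$, and summing over all values of $i$ gives
\[ C^{(r)}(k,n) = \sum_{i=1}^{r} b_i\, C^{(r)}(k-1, n-i). \]

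The remaining step is to replace the upper summation limit $r$ by $\min\{r, n-k+1\}$, and I expect this bookkeeping to be the only delicate point. If $i > n-k+1$, then $n - i < k - 1$, so the truncated composition would have to express a number strictly smaller than its number of parts; by the last identity in Proposition~\ref{pp1} (the case $k > n$, applied here as $k-1 > n-i$) we have $C^{(r)}(k-1, n-i) = 0$, and these terms contribute nothing to the sum. Hence the summation may be cut off at $i = \min\{r, n-k+1\}$ without changing its value, which is exactly the claimed recursion. I would close by remarking that the hypothesis $k \le n \le kr$ guarantees that the range of summation is nonempty and that every index $n-i$ appearing with a nonzero coefficient is at least $k-1$, so no degenerate or negative arguments of $C^{(r)}$ arise.
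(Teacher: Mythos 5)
Your proof is correct and uses the same decomposition as the paper: classifying generalized compositions by the size and type of their last part, which contributes the factor $b_i C^{(r)}(k-1,n-i)$. The paper states this in one sentence; your additional care in justifying the upper limit $\min\{r,n-k+1\}$ via the vanishing of $C^{(r)}(k-1,n-i)$ for $n-i<k-1$ is a welcome but inessential elaboration of the same argument.
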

\begin{proof} Equation (\ref{e1}) is true since there are $b_iC^{(r)}(k-1,n-i),\;(i=1,\ldots,n-k+1)$ generalized compositions ending with one of the $i$'s.
\end{proof}

Following \cite{sh}, we denote
\[\big(b_1+b_2x+b_x^2+\cdots+b_rx^{r-1}\big)^k=\sum_{i=0}^{(r-1)k}{k,r|\mathbf b\choose i}x^i,\]
where ${k,r|\mathbf b\choose i}$ are called  weighted
polynomial coefficients.

\begin{theorem}\label{t1} The following formula is true.
\[ C^{(r)}(k,n)={k,r|\mathbf b\choose n-k},\;(k\leq n\leq
rk),\]
\end{theorem}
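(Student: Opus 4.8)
The plan is to prove Theorem~\ref{t1} by induction on the number of parts $k$, showing that both sides of the claimed identity satisfy the same recursion with the same initial data. Write $P(x)=b_1+b_2x+b_3x^2+\cdots+b_rx^{r-1}$, so that by definition $\binom{k,r\mid\mathbf b}{i}$ is the coefficient of $x^i$ in $P(x)^k$. The base case $k=1$ is immediate: by Proposition~\ref{pp1} we have $C^{(r)}(1,n)=b_n$, while $\binom{1,r\mid\mathbf b}{n-1}$ is the coefficient of $x^{n-1}$ in $P(x)$, which is exactly $b_n$ for $1\le n\le r$. So the two agree at $k=1$.

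For the inductive step the key algebraic observation is to factor $P(x)^k=P(x)\cdot P(x)^{k-1}$ and read off coefficients. Expanding the product and collecting the coefficient of $x^{j}$ gives the recursion
\[
\binom{k,r\mid\mathbf b}{j}=\sum_{i=1}^{r}b_i\binom{k-1,r\mid\mathbf b}{\,j-i+1\,}.
\]
Setting $j=n-k$, the summand index becomes $j-i+1=(n-i)-(k-1)$, so under the inductive hypothesis $C^{(r)}(k-1,n-i)=\binom{k-1,r\mid\mathbf b}{(n-i)-(k-1)}$ each term here equals the corresponding term $b_i\,C^{(r)}(k-1,n-i)$ appearing in recursion (\ref{e1}). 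Thus, term by term, the polynomial-coefficient recursion and the combinatorial recursion (\ref{e1}) coincide, and the induction closes.

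The step I expect to require the most care is reconciling the two summation ranges: recursion (\ref{e1}) runs $i$ only up to $\min\{r,n-k+1\}$, whereas the coefficient recursion above runs $i$ to $r$. I would argue that the discrepancy contributes nothing. For $i>r$ the weight $b_i$ is zero, and for $n-k+1<i\le r$ the lower index $(n-i)-(k-1)$ is negative, so $\binom{k-1,r\mid\mathbf b}{(n-i)-(k-1)}=0$; on the combinatorial side the matching terms $C^{(r)}(k-1,n-i)$ also vanish by Proposition~\ref{pp1}, since then $k-1>n-i$. Hence truncating at $\min\{r,n-k+1\}$ discards only null terms, and the identification is exact throughout the stated range $k\le n\le rk$.

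As an alternative, essentially self-contained route, I would give a direct generating-function reading that avoids the induction entirely. Assign to a part of size $i$ (in one of its $b_i$ types) the monomial $b_i x^{i-1}$; then a generalized composition $(i_1,\ldots,i_k)$ of $n$ contributes $\prod_{t}b_{i_t}x^{i_t-1}$, whose $x$-exponent is $\sum_t(i_t-1)=n-k$. Summing over all type-labelled compositions into $k$ parts reproduces $\bigl(\sum_{i=1}^r b_i x^{i-1}\bigr)^k=P(x)^k$, so $C^{(r)}(k,n)$ is precisely the coefficient of $x^{n-k}$, namely $\binom{k,r\mid\mathbf b}{n-k}$. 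This version makes the meaning of the weight $x^{i-1}$ transparent, but the inductive proof above is the one I would present as primary since it reuses the recursion already established.
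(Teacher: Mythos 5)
Your primary argument is correct and essentially the same as the paper's: induction on $k$, factoring $\bigl(\sum_{i=1}^r b_ix^{i-1}\bigr)^k$ as $P(x)\cdot P(x)^{k-1}$, and matching coefficients against recursion~(\ref{e1}); in fact your uniform treatment of the out-of-range terms is cleaner than the paper's split into the cases $n\ge r-1$ and $n<r-1$. Your alternative direct generating-function reading (assigning $b_ix^{i-1}$ to each part) is a genuinely shorter, induction-free route, but since you present the inductive proof as primary, the approaches coincide.
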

\begin{proof} We use induction with respect to $k.$
For $k=1$ we have
\[b_1+b_2x+b_x^2+\cdots+b_rx^{r-1}={1,r|\mathbf b\choose 0}+{1,r|\mathbf b\choose 1}x+\cdots+{1,r|\mathbf b\choose r-1}x^{r-1}.\]
According to Proposition \ref{pp1} we have $c^{(r)}(1,i)=b_i,\;(1\leq i\leq r),$ therefore
\[c^{(r)}(1,i)={1,r|\mathbf b\choose i-1},\;(1\leq i\leq r),\] and the assertion is true for $k=1.$

Assume that $k>1,$ and that  the
assertion is true for $k-1.$
According to the obvious fact that
\[\big(\sum_{i=0}^{r-1}b_{i+1}x^i\big)^k=\big(\sum_{i=0}^{r-1}b_{i+1}x^i\big)\big(\sum_{i=0}^{r-1}b_{i+1}x^i\big)^{k-1},\]
and using the induction hypothesis we may write equation (\ref{e1}) in the form:
\begin{equation}\label{e3}\sum_{i=0}^{(r-1)k}{k,r|\mathbf b\choose i}x^i=\sum_{i=0}^rb_{i+1}x^i\cdot
 \sum_{i=0}^{(r-1)(k-1)}C^{(r)}(k-1,k+i-1)x^i.\end{equation}

Case 1. $n\geq r-1.$

Comparing the terms of $x^n$ in (\ref{e3}) we obtain
\[{k,m|\mathbf b\choose n}=b_1c^{(r)}(k-1,k+n-1)+\cdots+b_rC^{(r)}(k-1,k+n-r).\]
According to (\ref{e1}) we have  ${k,r|\mathbf b\choose
n}=C^{(r)}_{k}(n+k,\mathbf b).$

Case 2. $n<r-1.$ We again compare the terms of $x^n$ in (\ref{e3}) to obtain
\[{k,r|\mathbf b\choose i}=b_1C^{(r)}(k-1,k+n-1)+\cdots+b_{n+1}C^{(r)}(k-1,k-1),\]
and the assertion is again true according to (\ref{e1}).
\end{proof}

If $\mathbf b=(1,1,0,0,\ldots,0),$ then $C^{(2)}(k,n)$ is the number of standard compositions of $n$ into $k$ parts which are either $1$ or $2.$ We would like to show that this number is a binomial coefficient.
\begin{theorem}\label{c1}
Let $k$ be a positive integer, let $n$ a nonnegative  integer, and let $\mathbf b=(1,1,0,0,\ldots).$
Then
\[C^{(2)}(k,n+k)={k\choose n},\;(0\leq n\leq k).\]
\end{theorem}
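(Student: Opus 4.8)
The plan is to recognize this statement as an immediate specialization of Theorem \ref{t1}. First I would observe that with $\mathbf b=(1,1,0,0,\ldots)$ only the first two entries $b_1=b_2=1$ are nonzero, so effectively $r=2$ and the defining generating polynomial collapses to the ordinary binomial:
\[
(b_1+b_2x)^k=(1+x)^k=\sum_{i=0}^{k}{k\choose i}x^i.
\]
Comparing this expansion with the definition of the weighted polynomial coefficients, I would read off that ${k,2|\mathbf b\choose i}={k\choose i}$ for every $i$.

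Next I would invoke Theorem \ref{t1} in the case $r=2$, which gives $C^{(2)}(k,m)={k,2|\mathbf b\choose m-k}$ whenever $k\le m\le 2k$. Substituting $m=n+k$ and using the identification from the previous step yields
\[
C^{(2)}(k,n+k)={k,2|\mathbf b\choose n}={k\choose n}.
\]
It then only remains to translate the admissible range $k\le m\le 2k$ into the variable $n$: with $m=n+k$ this reads exactly $0\le n\le k$, matching the hypothesis.

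I expect no genuine obstacle here, since the result is a direct corollary of Theorem \ref{t1}; the only points requiring care are bookkeeping ones. I would double-check that the shift in the lower index of the binomial coefficient (from $m-k$ to $n$) is handled consistently, and as a sanity check I would verify the two boundary cases against Proposition \ref{pp1}: the endpoint $n=0$ corresponds to the unique composition of $k$ into $k$ ones, giving $C^{(2)}(k,k)=b_1^k=1={k\choose 0}$, while $n=k$ corresponds to the unique composition of $2k$ into $k$ twos, giving $C^{(2)}(k,2k)=b_2^k=1={k\choose k}$. Both agree with $\binom{k}{n}$, confirming that the range and indexing are correctly aligned.
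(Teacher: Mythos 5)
Your proof is correct, but it takes a genuinely different route from the paper. The paper proves this theorem by a self-contained induction on $k$: it specializes the recursion (\ref{e1}) to $r=2$, $b_1=b_2=1$, obtaining $C^{(2)}(k,k+n)=C^{(2)}(k-1,k+n-1)+C^{(2)}(k-1,k+n-2)$, and observes that together with the base case $C^{(2)}(1,1)=C^{(2)}(1,2)=1$ this is exactly Pascal's recursion $\binom{k}{n}=\binom{k-1}{n}+\binom{k-1}{n-1}$. You instead derive the statement as an immediate corollary of Theorem \ref{t1}, using the identification ${k,2|\mathbf b\choose i}=\binom{k}{i}$ coming from $(1+x)^k=\sum_i\binom{k}{i}x^i$; this is legitimate, and indeed the paper itself uses precisely this identification later when it deduces Theorem \ref{t3} from Theorem \ref{t2}. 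What your approach buys is brevity and a clear demonstration that the binomial case is just the weighted polynomial coefficient specialized to $r=2$, $\mathbf b=(1,1)$; what the paper's approach buys is independence from the (somewhat more involved) proof of Theorem \ref{t1}, giving an elementary combinatorial derivation of Pascal's rule directly from compositions into parts $1$ and $2$. Your boundary checks via Proposition \ref{pp1} are a sensible addition; note only that the identity $C^{(2)}(k,2k)=b_2^k$ is not literally listed in Proposition \ref{pp1} (which records $C^{(r)}(n,n)=b_1^n$), though it follows by the same one-line argument.
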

\begin{proof}
We firstly have
\[C^{(2)}_1(1)=1,\;C^{(2)}_1(2)=1.\]
Using induction, the recursion
\[C^{(2)}(k,k+n)=C^{(2)}(k-1,k+n-1)+C^{(2)}(k-1,k+n-2),\;(1\leq n\leq k),\]
becomes
\[{k\choose n}={k-1\choose n}+{k-1\choose n-1},\]
which is the well-known recursion for the binomial coefficients.
\end{proof}

It is a well-known fact that $F_{n+1}$ is the number of all
compositions of $n$ into parts which are either $1$ or $2.$ This
fact and the preceding corollary give a natural connection between
Fibonacci numbers and the binomial coefficients. Namely, we have the following well-known result:
\begin{theorem}\label{t3} Let $n$ be a positive integer. Then
\begin{equation}\label{e4}F_{n+1}=\sum_{i=\lceil\frac n2\rceil}^n{i\choose n-i}.\end{equation}
\end{theorem}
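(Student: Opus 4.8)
The plan is to combine the stated combinatorial meaning of the Fibonacci numbers with Theorem \ref{c1}. The starting point is the fact, quoted just above the statement, that $F_{n+1}$ is the number of \emph{all} compositions of $n$ into parts equal to $1$ or $2$. Every such composition has some number of parts $k$, and a composition of $n$ into exactly $k$ parts, each equal to $1$ or $2$, is counted by $C^{(2)}(k,n)$ when $\mathbf b=(1,1,0,0,\ldots)$. Sorting all compositions according to their number of parts therefore yields the identity $F_{n+1}=\sum_k C^{(2)}(k,n)$, with $k$ ranging over all admissible part-counts.

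Next I would evaluate each summand using Theorem \ref{c1}. That theorem gives $C^{(2)}(k,n+k)=\binom{k}{n}$, so setting the second argument equal to $n$ (i.e. writing the composed number as $n$ and the shifted index accordingly) turns this into $C^{(2)}(k,n)=\binom{k}{n-k}$. Substituting this into the sum replaces every $C^{(2)}(k,n)$ by a binomial coefficient $\binom{k}{n-k}$, which is exactly the summand appearing in \eqref{e4} once $i$ is identified with $k$.

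The step I expect to require the most care is fixing the range of summation. Here Proposition \ref{pp1} does the work: it states that $C^{(2)}(k,n)=0$ whenever $k>n$ and also whenever $kr<n$, which for $r=2$ reads $2k<n$. The first condition bounds the sum above by $k\le n$, and the second forces $k\ge n/2$, that is $k\ge\lceil n/2\rceil$. One must also check that on this range the hypothesis of Theorem \ref{c1} is met, i.e. that $0\le n-k\le k$ holds for $\lceil n/2\rceil\le k\le n$; this is immediate since $n-k\ge 0$ is equivalent to $k\le n$ and $n-k\le k$ is equivalent to $k\ge n/2$. Restricting the sum to these nonzero terms then gives precisely $F_{n+1}=\sum_{k=\lceil n/2\rceil}^{n}\binom{k}{n-k}$, completing the argument.
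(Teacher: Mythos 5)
Your proof is correct and follows the same route as the paper: decompose the set of compositions of $n$ into parts $1$ or $2$ by the number of parts $k$, apply Theorem \ref{c1} to identify $C^{(2)}(k,n)$ with $\binom{k}{n-k}$, and read off the summation range from the constraint $k\leq n\leq 2k$. The paper's own proof is just a terser version of this; your extra care in verifying the bounds via Proposition \ref{pp1} and checking the hypothesis $0\leq n-k\leq k$ of Theorem \ref{c1} fills in details the paper leaves implicit.
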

\begin{proof}
The assertion follows from Corollary \ref{c1} and the fact that:
\[F_{n+1}=\sum_{k=1}^nC^{(2)}(k,n)),\;(k\leq n\leq 2k).\]
\end{proof}

Now, we are going to generalize this formula on the $r$- generalized Fibonacci numbers and the weighted polynomial coefficients.

Let $\mathbf b=(b_1,b_2,\ldots,b_r)$ be a sequence of nonnegative integers.  We let $F^{(r)\mathbf b}_{n}$ denote the number of all generalized compositions of $n.$ Additionally, we put $F^{(r)\mathbf b}_{0}=1.$
\begin{proposition}\label{p1} Let $n,r$ be positive integers.

If  $n\geq r,$ then
 \[F^{(r)\mathbf b}_{n}=b_1F^{(r)\mathbf b}_{n-1}+b_2F^{(r)\mathbf b}_{n-2}+\cdots+b_rF^{(r)\mathbf b}_{n-r}.\]

If  $n<r,$ then
\[F^{(r)\mathbf b}_{n}=b_1F^{(r)\mathbf b}_{n-1}+b_2F^{(r)\mathbf b}_{n-2}+\cdots+b_nF^{(r)\mathbf b}_{0}.\]
\end{proposition}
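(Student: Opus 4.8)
The plan is to give a direct combinatorial argument by conditioning on the size of the final part, exactly mirroring the proof of the recursion (\ref{e1}). First I would fix $n$ and partition the collection of all generalized compositions of $n$ according to the value $i$ of their last part. Since there are $b_i$ admissible types for a part equal to $i$, and deleting this last part leaves an arbitrary generalized composition of $n-i$, the block indexed by $i$ contains exactly $b_i\,F^{(r)\mathbf b}_{n-i}$ compositions. Summing over the permissible values of $i$ yields the single master identity
\[F^{(r)\mathbf b}_{n}=\sum_{i=1}^{\min\{r,n\}}b_i\,F^{(r)\mathbf b}_{n-i}.\]

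Next I would explain why the two displayed formulas are merely the two cases of this one identity. A part is constrained by $1\le i\le r$ (no type exists beyond $r$, since $b_j=0$ for $j>r$) and by $i\le n$ (a single part cannot exceed the number being composed). When $n\ge r$ the binding constraint is $i\le r$, which produces the full $r$-term recursion; when $n<r$ the binding constraint is $i\le n$, which produces the truncated sum ending in $b_n\,F^{(r)\mathbf b}_{0}$.

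The one point requiring care — and the place I would be most deliberate — is the extreme term $i=n$, where removing the last part leaves the empty composition of $0$. This is precisely where the convention $F^{(r)\mathbf b}_{0}=1$ enters: the generalized compositions of $n$ consisting of a single part of size $n$ (in any of its $b_n$ types) must contribute $b_n\cdot 1$, so setting $F^{(r)\mathbf b}_{0}=1$ makes the count consistent. Alternatively, one could bypass the combinatorial argument and derive the identity by summing the recursion (\ref{e1}) over all $k$, using $F^{(r)\mathbf b}_{n}=\sum_{k}C^{(r)}(k,n)$; I expect this to yield the same result but to demand more bookkeeping in tracking the summation ranges, so I would favor the direct last-part decomposition above.
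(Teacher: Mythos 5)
Your proposal is correct and follows essentially the same route as the paper, which also proves both formulas by classifying generalized compositions according to their last part and counting $b_iF^{(r)\mathbf b}_{n-i}$ in each class. Your extra care with the boundary term $i=n$ and the convention $F^{(r)\mathbf b}_{0}=1$ is a welcome elaboration, but it is the same argument.
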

\begin{proof} Both formulas are true according to the fact that there are $b_1F^{(r)\mathbf b}_{n-1}$ generalized compositions ending by one of the $1$'s, and so on.
\end{proof}
In \cite{sh} the numbers $F^{(r)\mathbf b}_{n}$ are called   $r$-generalized Fibonacci numbers.

Adding the generalized compositions into $1,2,\ldots$ parts, we obtain all generalized compositions. This fact and
Theorem \ref{t1} imply
\begin{theorem}\label{t2} Let $n,r$ be positive integers. Then
 \[F^{(r)\mathbf b}_{n}=\sum_{k=\lceil\frac{n}{r}\rceil}^n{k,r|\mathbf b\choose n-k}.\]
\end{theorem}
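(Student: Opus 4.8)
The plan is to argue combinatorially, partitioning the set of all generalized compositions of $n$ according to the number of parts, and then invoking Theorem \ref{t1} to rewrite each block in terms of the weighted polynomial coefficients. The guiding remark is the one stated just before the theorem: every generalized composition of $n$ falls into exactly one class according to how many parts it uses, and these classes are counted by the numbers $C^{(r)}(k,n)$.

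First I would observe that, by the very definition of $C^{(r)}(k,n)$, the generalized compositions of $n$ into exactly $k$ parts are counted by $C^{(r)}(k,n)$. Since the number of parts partitions the collection of all generalized compositions of $n$, summing over all admissible $k$ gives
\[F^{(r)\mathbf b}_{n}=\sum_{k} C^{(r)}(k,n).\]
Next I would pin down the range of $k$. Because each part is at least $1$, a composition into $k$ parts has total at least $k$, so $C^{(r)}(k,n)=0$ whenever $k>n$; because each part is at most $r$, the total is at most $kr$, so $C^{(r)}(k,n)=0$ whenever $kr<n$. Both vanishing statements are recorded in Proposition \ref{pp1}. Hence the only possibly nonzero terms occur for $\lceil n/r\rceil\le k\le n$, and the sum reduces to
\[F^{(r)\mathbf b}_{n}=\sum_{k=\lceil n/r\rceil}^{n} C^{(r)}(k,n).\]

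Finally, for each $k$ in this range we have $k\le n\le rk$, so Theorem \ref{t1} applies and yields $C^{(r)}(k,n)={k,r|\mathbf b\choose n-k}$. Substituting gives the claimed identity. There is essentially no hard step here; the only point requiring care is matching the summation range to the stated bounds $\lceil n/r\rceil\le k\le n$, which is exactly the range in which the hypothesis $k\le n\le rk$ of Theorem \ref{t1} holds, and the two vanishing conditions of Proposition \ref{pp1} guarantee that no nonzero contribution is lost by truncating the sum to this range.
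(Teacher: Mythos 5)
Your proposal is correct and follows the same route as the paper: partition the generalized compositions of $n$ by their number of parts, use the vanishing conditions of Proposition \ref{pp1} to restrict the summation range to $\lceil n/r\rceil\le k\le n$, and apply Theorem \ref{t1} to each term. You merely spell out the range justification that the paper leaves as ``the rest is clear.''
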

\begin{proof} The lower bound of the index $k$ is $\lceil\frac{n}{r}\rceil$ since $n\leq rk.$ The rest is clear.
\end{proof}
\begin{note} The preceding equation suits equation $(18)$ in \cite{sh}.
\end{note}
 We shall now show that Theorem \ref{t3} is a consequence of Theorem \ref{t2}.
Take $r=2,$ and $b_1=b_2=1.$ Then the formulas  in Proposition \ref{p1} have the form:
\[F^{(2)\mathbf b}_{n}=F^{(2)\mathbf b}_{n-1}+F^{(2)\mathbf b}_{n-2},\]
with the initial conditions given by
\[F^{(2)\mathbf b}_{0}=1,\;F^{(2)\mathbf b}_{1}=1.\]
This is the recursion for Fibonacci numbers, shifted for $1$ forward. Since ${k,2|\mathbf b\choose n-k}={k\choose n-k}$ we
conclude that Theorem \ref{t3} is a consequence of Theorem \ref{t2}.

 Take
$b_1=b_2=\cdots=b_r=1$ in  the conditions of Theorem \ref{t2}. Then  ${k,r|\mathbf b\choose i}$ are the polynomial
coefficients, and are usually denoted by ${k,r \choose n }.$
We thus have
\begin{corollary} The polynomial coefficient ${k,r \choose n-k }$ is equal to the number standard composition of $n,\;(k\leq n\leq rk)$
 into $k$ parts, all of which are $\leq r.$
\end{corollary}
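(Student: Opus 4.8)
The plan is to derive this corollary directly from Theorem~\ref{t1} by specializing to the sequence $\mathbf b=(1,1,\ldots,1)$ with all $r$ entries equal to $1$. First I would examine the generating function that defines the weighted polynomial coefficients. Substituting $b_1=b_2=\cdots=b_r=1$ collapses it to
\[(1+x+x^2+\cdots+x^{r-1})^k=\sum_{i=0}^{(r-1)k}{k,r\choose i}x^i,\]
which is exactly the defining relation for the ordinary polynomial coefficients ${k,r\choose i}$. Thus under this choice of weights the weighted coefficient ${k,r|\mathbf b\choose n-k}$ reduces to ${k,r\choose n-k}$, as the statement requires.

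Next I would identify the combinatorial meaning of $C^{(r)}(k,n)$ for this $\mathbf b$. Setting $b_i=1$ for each $i\in\{1,\ldots,r\}$ means there is exactly one type of each of the parts $1,2,\ldots,r$ and no admissible part exceeds $r$. A generalized composition with a single type per part is nothing other than an ordinary composition, so $C^{(r)}(k,n)$ counts precisely the ordinary compositions of $n$ into $k$ parts, each part belonging to $\{1,2,\ldots,r\}$.

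Finally, applying Theorem~\ref{t1} with $\mathbf b=(1,1,\ldots,1)$ gives $C^{(r)}(k,n)={k,r\choose n-k}$ for $k\le n\le rk$, and combining this with the two observations above yields the assertion. The only point requiring care is the combinatorial identification in the middle step---verifying that weights all equal to $1$ genuinely recover the classical notion of a composition with parts bounded by $r$; once this is granted, the result is an immediate substitution into the already-established Theorem~\ref{t1}, so no new computation is needed.
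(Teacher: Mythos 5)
Your proposal is correct and follows essentially the same route as the paper: the paper obtains this corollary simply by specializing the weights to $b_1=\cdots=b_r=1$, observing that the weighted polynomial coefficients reduce to the ordinary polynomial coefficients and that generalized compositions with one type of each part $1,\ldots,r$ are just ordinary compositions with parts at most $r$. Your only (harmless) divergence is that you correctly ground the argument in Theorem~\ref{t1}, whereas the paper's text nominally cites Theorem~\ref{t2} even though the fixed-$k$ statement really comes from Theorem~\ref{t1}.
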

We finish the paper with a formula connecting the $r$-generalized Fibonacci numbers with the polynomial coefficients.

If $b_1=b_2=\cdots=b_r=1$, the recursions from Proposition \ref{p1} have the form

 \[F^{(r)\mathbf b}_{n}=F^{(r)\mathbf b}_{n-1}+F^{(r)\mathbf b}_{n-2}+\cdots+F^{(r)\mathbf b}_{n-r},\]
 for $n\geq r,$ and
\[F^{(r)\mathbf b}_{n}=F^{(r)\mathbf b}_{n-1}+F^{(r)\mathbf b}_{n-2}+\cdots+1F^{(r)\mathbf b}_{0},\]
for $n<r.$  It follows that the initial conditions are
\[F^{(r)\mathbf b}_{0}=1,\;F^{(r)\mathbf b}_{1}=1,\ldots,;F^{(r)\mathbf b}_{r-1}=F_r.\]
Therefore, we have the recursion for $r$-generalized Fibonacci numbers $F^{(r)}_i,$ without leading zeroes.

According Theorem \ref{t2} we have
\begin{corollary} Let $n,r$ be positive integers. Then
\[F^{(r)}_{n+r-1}=\sum_{k=\lceil\frac nr\rceil}^n{k,r \choose n-k }.\]
\end{corollary}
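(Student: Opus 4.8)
The plan is to obtain this identity as a direct specialization of Theorem \ref{t2} to the constant weight sequence $\mathbf b=(1,1,\ldots,1)$, followed by a reconciliation of the two normalizations of the $r$-generalized Fibonacci numbers through an index shift. First I would observe that when every $b_i=1$ the generating polynomial $b_1+b_2x+\cdots+b_rx^{r-1}$ collapses to $1+x+\cdots+x^{r-1}$, so by the very definition of the coefficients the weighted polynomial coefficients ${k,r|\mathbf b\choose i}$ become the ordinary polynomial coefficients ${k,r\choose i}$. Substituting this specialization into Theorem \ref{t2} immediately yields
\[F^{(r)\mathbf b}_{n}=\sum_{k=\lceil n/r\rceil}^{n}{k,r\choose n-k},\]
so the entire remaining task is to identify the left-hand side with $F^{(r)}_{n+r-1}$.

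The central step is therefore to justify the relation $F^{(r)\mathbf b}_{n}=F^{(r)}_{n+r-1}$. By Proposition \ref{p1} specialized to $b_i=1$, the sequence $\big(F^{(r)\mathbf b}_{n}\big)_{n\ge 0}$ satisfies the $r$-generalized Fibonacci recursion $F^{(r)\mathbf b}_{n}=F^{(r)\mathbf b}_{n-1}+\cdots+F^{(r)\mathbf b}_{n-r}$ for $n\ge r$, together with the initial values $F^{(r)\mathbf b}_{0}=1,\,F^{(r)\mathbf b}_{1}=1,\ldots$ computed above. The standard numbers $F^{(r)}_{m}$ obey the same order-$r$ recursion but carry $r-1$ leading zeros, so that $F^{(r)}_{0}=\cdots=F^{(r)}_{r-2}=0$ and $F^{(r)}_{r-1}=1$ is their first nonzero term. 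Since two solutions of the same order-$r$ linear recursion coincide everywhere once they agree on any $r$ consecutive indices, I would verify that the initial block $F^{(r)\mathbf b}_{0},\ldots,F^{(r)\mathbf b}_{r-1}$ matches $F^{(r)}_{r-1},\ldots,F^{(r)}_{2r-2}$; this pins the shift to $m=n+r-1$, which is exactly the statement that $F^{(r)\mathbf b}$ is the standard sequence with its leading zeros removed.

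Finally, I would combine the two steps: replacing $F^{(r)\mathbf b}_{n}$ by $F^{(r)}_{n+r-1}$ in the specialized form of Theorem \ref{t2} produces precisely the claimed identity. The step I expect to be the main obstacle is the second one, namely the bookkeeping of the index shift, because it depends on fixing the exact convention (the number of leading zeros) for $F^{(r)}$ and then checking that $r$ consecutive base cases agree; once the initial block is matched, the common recurrence makes the rest automatic. This is the same mechanism that recovers Theorem \ref{t3} when $r=2$, where the shift $n+r-1=n+1$ reproduces the classical relation $F_{n+1}=\sum_{k}\binom{k}{n-k}$.
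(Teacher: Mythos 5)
Your proposal is correct and follows essentially the same route as the paper: specialize Theorem \ref{t2} to $\mathbf b=(1,1,\ldots,1)$ so the weighted polynomial coefficients become ordinary polynomial coefficients, and identify $F^{(r)\mathbf b}_{n}$ with $F^{(r)}_{n+r-1}$ via the common order-$r$ recursion and the removal of the $r-1$ leading zeros. You are in fact somewhat more careful than the paper about the index-shift bookkeeping (matching $r$ consecutive initial values), which is a welcome precaution but not a different method.
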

\begin{note} The preceding equation suits  equation $(9)$ in \cite{sh}.
\end{note}

\end{document}